\theoremstyle{plain}
\newtheorem{theorem}{Theorem}[section]
\theoremstyle{plain}
\newtheorem{lemma}{Lemma}[section]
\newcommand{\be} {\begin{equation}}
\newcommand{\ee} {\end{equation}}
\newcommand{\bea} {\begin{eqnarray}}
\newcommand{\eea} {\end{eqnarray}}
\newcommand{\Bea} {\begin{eqnarray*}}
	\newcommand{\Eea} {\end{eqnarray*}}
\newcommand{\al} {\alpha}
\newcommand{\ba} {\beta}
\newcommand{\Ga} {\Gamma}
\newcommand{\Om} {\Omega}
\newcommand{\De} {\Delta}
\newcommand{\la} {\lambda}
\newcommand{\no} {\nonumber}
\newcommand{\lab} {\label}
\newcommand{\R}{\mathbb R}
\newcommand{\Rn}{\mathbb R^N}
\newcommand{\Hs}{\dot{H}^s(\mathbb{R}^{N})}
\newcommand{\hms}{(\dot{H}^{s})'}
\newcommand{\authorfootnotes}{\renewcommand\thefootnote{\@fnsymbol\c@footnote}}%
\numberwithin{equation}{section} \allowdisplaybreaks
\begin{document}
        \title[Existence of solutions]{Nonhomogeneous systems involving critical or subcritical nonlinearities}

\date{}

\author[Mousomi Bhakta]{Mousomi Bhakta\textsuperscript{1}}
\address{\textsuperscript{1}Department of Mathematics, Indian Institute of Science Education and Research, Dr. Homi Bhaba Road, Pune-411008, India}
\email{mousomi@iiserpune.ac.in}

\author[Souptik Chakraborty]{Souptik Chakraborty\textsuperscript{1}}
\email{soupchak9492@gmail.com}

\author[Patrizia Pucci]{Patrizia Pucci\textsuperscript{2}}
\address{\textsuperscript{2}Dipartimento di Matematica e Informatica, Universit\'a degli Studi di Perugia
Via Vanvitelli 1, I-06123 Perugia, Italy}
\email{patrizia.pucci@unipg.it}

\keywords{Nonlocal system of equations,  fractional Laplacian, positive solutions, nontrivial solution, local minimum.}

\begin{abstract}
This paper deals with existence of a nontrivial {\it positive} solution to systems of equations involving
nontrivial nonhomogeneous terms and
critical
or subcritical nonlinearities.
Via a minimization argument we prove existence of a positive solution whose energy is negative provided that the  nonhomogeneous terms
 are small enough in the dual norm.
\medskip

\noindent
\emph{\bf 2010 MSC:} 35J50, 35R11, 35J47, 35A15, 35B33, 35J60.
\end{abstract}

\maketitle

\section{Introduction}

In this paper we consider the following system of equations
\begin{equation}
  \tag{$\mathcal S$}\label{sys-Q}
\left\{\begin{aligned}
		&(-\Delta)^s u +\gamma u= \frac{\al}{\al+\ba}|u|^{\al-2}u|v|^{\ba}+f(x)\;\;\text{in}\;\mathbb{R}^{N},\\
		&(-\Delta)^s v +\gamma v= \frac{\ba}{\al+\ba}|v|^{\ba-2}v|u|^{\al}+g(x)\;\;\text{in}\;\mathbb{R}^{N},\\
              & u, \, v >0\,  \mbox{ in }\,\mathbb{R}^{N},
		 \end{aligned}
  \right.
\end{equation}
where $N>2s$, $\al,\,\ba>1$, $\al+\ba\le2^*_s$, $2^*_s:=2N/(N-2s)$, $f,\, g$ are nontrivial nonnegative functionals in the dual space of $\dot{H}^s(\Rn)$ if $\al+\ba=2^*_s$ and  of ${H}^s(\Rn)$ if $\al+\ba<2^*_s$, while $\gamma=0$ if $\al+\ba=2^*_s$
and $\gamma=1$ if $\al+\ba<2^*_s$.
Here $(-\De)^s$ denotes the  fractional Laplace operator which can be defined for the Schwartz class functions
$\varphi$ as follows
\begin{equation*} \label{De-u}
  \left(-\Delta\right)^s\varphi(x): = c_{N,s}
\, \text{P.V.} \int_{\Rn}\frac{\varphi(x)-\varphi(y)}{|x-y|^{N+2s}} \, {\rm d}y, \quad c_{N,s}= \frac{4^s\Ga(N/2+ s)}{\pi^{N/2}|\Ga(-s)|}.
\end{equation*}
Let
$$\dot{H}^s(R^{N}): =\bigg\{u\in L^{2^*_s}(\R^N) \; : \; \iint_{\mathbb{R}^{2N}}\frac{|u(x)-u(y)|^2}{|x-y|^{N+2s}}\,{\rm d}x\,{\rm d}y<\infty\bigg\},$$
be the homogeneous fractional Sobolev space, endowed 	with the inner product
$\langle\cdot,\cdot\rangle_{\dot{H}^s}$ and corresponding Gagliardo norm
	$$\|u\|_{\dot{H}^{s}}:=\left( \iint_{\mathbb{R}^{2N}} \frac{|u(x)-u(y)|^2}{|x-y|^{N+2s}}\,{\rm d}x\,{\rm d}y\right)^{1/2}.$$
While $H^s(\Rn)$ is the standard fractional Sobolev Hilbert space with inner product
$\langle\cdot,\cdot\rangle_{\Hs}$ and corresponding norm
$$\|u\|_{H^{s}}:=\left(\|u\|_{2}^2+ \|u\|_{\dot{H}^{s}}^2\right)^{1/2},$$
where in general $\|\cdot\|_{p}$ is the standard norm on the Lebesgue  space $L^p(\Rn)$, when $p\ge1$.

In the vectorial case, the natural solution space for~\eqref{sys-Q} is  the Hilbert space $\Hs\times\Hs$, equipped with the inner product
$$\big\langle (u,v), (\phi,\psi)\big\rangle_{\dot{H}^s\times\dot{H}^s}:=\langle u,\phi\rangle_{\dot{H}^s}+\langle v,\psi\rangle_{\dot{H}^s},$$ and the norm
$$\|(u,v)\|_{\dot{H}^s\times\dot{H}^s}:=\big(\|u\|^2_{\dot{H}^s}+\|v\|^2_{\dot{H}^s}\big)^\frac{1}{2},$$
when $\al+\ba=2^*_s$, while is $H^s(\Rn)\times H^s(\Rn)$ equipped with the inner product
$$\big\langle (u,v), (\phi,\psi)\big\rangle_{H^s\times H^s}:=\langle u,\phi\rangle_{\dot{H}^s}+\langle v,\psi\rangle_{\dot{H}^s}+ \langle u,\phi\rangle_{L^2}+\langle v,\psi\rangle_{L^2},$$ and the norm
$$\|(u,v)\|_{H^s\times H^s}:=\big(\|u\|^2_{H^s}+\|v\|^2_{H^s} \big)^\frac{1}{2},$$
if $\al+\ba<2^*_s$.

When  $\al+\ba=2^*_s$, we say $(u,v)\in\dot{H}^s(\Rn)\times\dot{H}^s(\Rn)$ is a {\em solution}
of \eqref{sys-Q} if $u,\, v>0$ in $\Rn$ and
\begin{align*}
\big\langle (u,v), (\phi,\psi)\big\rangle_{\dot{H}^s\times\dot{H}^s}&= \frac{\al}{2^*_s}\int_{\R{^N}}|u|^{\al-2}u|v|^{\ba}\phi\,{\rm d}x+\frac{\ba}{2^*_s}\int_{\R{^N}}|v|^{\ba-2}v|u|^{\al}\psi\,{\rm d}x\\
&\qquad\qquad\qquad+ \prescript{}{(\dot{H}^s)'}{\langle}f,\phi{\rangle}_{\dot{H}^s} +\prescript{}{(\dot{H}^s)'}{\langle}g,\psi{\rangle}_{\dot{H}^s}
\end{align*}
holds for every $(\phi,\psi)\in\dot{H}^s(\Rn)\times\dot{H}^s(\Rn)$, while if $\al+\ba<2^*_s$ a couple
$(u,v)\in{H}^s(\Rn)\times{H}^s(\Rn)$ is said to be a
{\em solution}
of \eqref{sys-Q} if $u,\, v>0$ in $\Rn$ and
\begin{align*}
\big\langle (u,v), (\phi,\psi)\big\rangle_{{H}^s\times{H}^s}&= \frac{\al}{\al+\ba}\int_{\R{^N}}|u|^{\al-2}u|v|^{\ba}\phi\,{\rm d}x+\frac{\ba}{\al+\ba}\int_{\R{^N}}|v|^{\ba-2}v|u|^{\al}\psi\,{\rm d}x\\
&\qquad\qquad\qquad+ \prescript{}{H^{-s}}{\langle}f,\phi{\rangle}_{H^s}
+\prescript{}{H^{-s}}{\langle}g,\psi{\rangle}_{{H}^s}
\end{align*}
holds for every $(\phi,\psi)\in {H}^s(\Rn)\times {H}^s(\Rn)$.

 When the domain is a open bounded subset of $\Rn$, in a pioneering work, Tarantello \cite{T} proves existence of two positive solutions for the nonhomogeneous problem
\be\lab{1-12-1}-\De u=|u|^{2^*-2}u+f \mbox{ in }\, \Omega, \quad u=0 \mbox{ on }\, \partial\Omega,
\quad 2^*=\frac{2N}{N-2},\ee where $0\leq f\in H^{-1}(\Om)$ satisfies suitable condition. In \cite{CDPM, Mu} the authors study existence of sign changing solutions of \eqref{1-12-1}. In \cite{BP}, the first and third author of the current paper treat the scalar version of~$(\mathcal{S})$ with the critical nonlinearity, namely the equation:
\begin{equation*}
\left\{\begin{aligned}
		&(-\Delta)^s u = a(x) |u|^{2^*_s-2}u+f(x)\;\;\text{in}\;\mathbb{R}^{N},\\
		&u >0 \quad\text{in}\quad\mathbb{R}^{N},\quad
		u \in \dot{H}^{s}{(\mathbb{R}^{N})},
		 \end{aligned}
  \right.
\end{equation*}
where $0< a\in L^\infty(\Rn)$,\, $a(x)\to 1$ as $|x|\to\infty$ and $f\in \dot{H}^s(\Rn)'$ and prove  existence of at least two positive solutions when $\|f\|_{(\dot{H}^s)'}$ is small enough. For the scalar version of~$(\mathcal{S})$, with subcritical nonlinearities, we refer to~\cite{AT, CZ, J, Z}  in the local case and  to~\cite{BCG}
in the nonlocal case. In all these papers   existence of at least two positive solutions is actually proved.
\medskip

Elliptic systems arise in biological applications (e.g. population dynamics) or physical applications (e.g. models of a nuclear reactor) and have been drawn a lot of attention (see \cite{AMS, CFMT, LW, M, QS, RZ} and references therein). For systems in bounded domains with nonhomogeneous terms we refer to~\cite{BR}.  In the case of vector valued solutions for Schr\"{o}dinger systems of equations in $\R^3$ with nonhomogeneous perturbation, we refer to~\cite{LP}, where the authors have applied Lyapunov--Schmidt reduction scheme to construct multiple solutions.

In the nonlocal case, there are not so many papers, in which weakly coupled systems of equations have been studied. To quote a few, we refer to~\cite{BN, CS, CMSY, FMPSZ, GMS, HSZ}. Actually all these papers deal with Dirichlet systems of equations in bounded domains. For the nonlocal systems of equations in the entire space $\Rn$, we cite \cite{FPS, FPZ}
and the references therein.   To the best of our knowledge, so far there have been no papers in the literature, where existence of nontrivial solutions to system of equations, with fractional Laplacian and the critical or subcritical exponents in $\Rn$, have  been established in the
nontrivial nonhomogeneous case. The main result in the paper is new even in the local case $s=1$ and is stated below,
where ker$(f)$ denotes the kernel of $f$.

\begin{theorem}\lab{th:ex-f} $(i)$ If $\al+\ba=2^*_s$, and
$f,\, g$ are nontrivial nonnegative functionals in the dual space $\dot{H}^s(\Rn)'$ of $\dot{H}^s(\Rn)$ such that {\rm ker}$(f)$= {\rm ker}$(g)$, then system~\eqref{sys-Q} admits a nontrivial solution $(\bar u, \bar v)$ such that $\bar u>0$ and $\bar v>0$,
provided that $0<\max\{\|f\|_{\hms},\|g\|_{\hms}\}\leq d$ for some $d>0$ sufficiently small.\smallskip

$(ii)$ If $\al+\ba<2^*_s$, and
$f,\, g$ are nontrivial nonnegative functionals in the dual space $H^{-s}(\Rn)$ of $H^s(\Rn)$ such that
{\rm ker}$(f)$={\rm ker}$(g)$, then~\eqref{sys-Q} admits a nontrivial solution $(\bar u, \bar v)$ such that $\bar u>0$ and $\bar v>0$,
provided that $0<\max\{\|f\|_{H^{-s}},\|g\|_{H^{-s}}\}\leq d$ for some $d>0$ sufficiently small.\smallskip

Furthermore, in both the cases $(i)$ and $(ii)$ if $f\equiv g$, then the solution $(\bar u, \bar v)$ has the property that
$\bar u\not\equiv \bar v$, whenever $\al\neq\ba$. Finally, if $\al=\ba$ but $f\not\equiv g$, then $\bar u\not\equiv \bar v$.
\end{theorem}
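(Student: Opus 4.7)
The plan is to realize $(\bar u,\bar v)$ as a local minimizer of the natural energy functional associated with \eqref{sys-Q}, exploiting the smallness of $(f,g)$ to carve a well of strictly negative energy around the origin. I describe the critical case $\alpha+\beta=2^*_s$ in detail; the subcritical case is parallel, with $\|\cdot\|_{\dot H^s\times\dot H^s}$ replaced by $\|\cdot\|_{H^s\times H^s}$ and the delicate critical compactness analysis replaced by the compactness of the subcritical embedding (combined with the mass term $\gamma=1$, which restores tightness on $\R^N$). Define
$$J(u,v):=\tfrac12\|(u,v)\|_{\dot H^s\times\dot H^s}^{2}-\tfrac1{2^*_s}\int_{\R^N}u_+^{\alpha}v_+^{\beta}\,dx-\langle f,u\rangle-\langle g,v\rangle,$$
working throughout with the positive parts so that any critical point automatically satisfies $u_-\equiv v_-\equiv 0$.

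A Hölder-plus-Sobolev estimate yields $\int_{\R^N} u_+^{\alpha}v_+^{\beta}\,dx\le C\|(u,v)\|^{2^*_s}$, hence $J(u,v)\ge \varphi(\|(u,v)\|)$ with
$$\varphi(t):=\tfrac12 t^2-Ct^{2^*_s}-Mt,\qquad M:=\|f\|_{(\dot H^s)'}+\|g\|_{(\dot H^s)'}.$$
Choose $d>0$ so small that whenever $\max\{\|f\|,\|g\|\}\le d$, the function $\varphi$ possesses a strictly negative local minimum at some $t_0\in(0,t_1)$ together with a strictly positive local maximum at $t_1>t_0$. Then $c_0:=\inf_{\overline{B}_{t_1}}J<0$, and any minimizing sequence eventually lies strictly inside the open ball $B_{t_1}$. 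Ekeland's variational principle on the complete set $\overline{B}_{t_1}$ now furnishes a bounded Palais--Smale sequence $(u_n,v_n)$ with $J(u_n,v_n)\to c_0$ and $J'(u_n,v_n)\to 0$ in the dual.

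The heart of the argument is the compactness of $(u_n,v_n)$ in the critical regime. After extracting a weak limit $(\bar u,\bar v)$, one applies a vectorial Brezis--Lieb / concentration-compactness decomposition for Palais--Smale sequences of the autonomous limit system (in the spirit of \cite{CS,FPZ}) to split the energy defect into a sum of bubbles, each carrying a universal energy quantum $\kappa_*>0$. Since $c_0<0<\kappa_*$, no bubble can actually appear, so $(u_n,v_n)\to(\bar u,\bar v)$ strongly, producing a critical point of $J$ at level $c_0<0$. This is the principal technical obstacle; in the subcritical case the analogous issue is ruling out loss of mass at infinity on $\R^N$, which is handled by the mass term $\gamma=1$ together with the smallness of $(f,g)$.

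Positivity is then obtained by testing $J'(\bar u,\bar v)=0$ against $(-\bar u_-,0)$ and $(0,-\bar v_-)$: since $f,g\ge 0$ one forces $\bar u_-\equiv\bar v_-\equiv 0$, and the fractional strong maximum principle upgrades $\bar u,\bar v\ge 0$ to $\bar u,\bar v>0$ (neither component can vanish identically, since e.g.\ $\bar u\equiv 0$ would force $f\equiv 0$ via the first equation). The hypothesis $\mathrm{ker}(f)=\mathrm{ker}(g)$ enters to rule out degenerate minimizers at this stage. For the symmetry-breaking assertion, suppose for contradiction that $\bar u\equiv\bar v=:w>0$; subtracting the second equation from the first yields
$$\frac{\alpha-\beta}{\alpha+\beta}\,w^{\alpha+\beta-1}=g-f\quad\text{in }\R^N.$$
If $f\equiv g$ and $\alpha\ne\beta$, the left-hand side is pointwise strictly positive while the right vanishes identically; if $\alpha=\beta$ and $f\not\equiv g$, the roles are reversed. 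Either case is absurd, so $\bar u\not\equiv\bar v$ in both settings.
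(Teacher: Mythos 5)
Your overall strategy (local minimization at negative energy, positivity via testing against negative parts, symmetry‐breaking by subtracting the two equations) coincides with the paper's in spirit, but the existence step takes a genuinely different and substantially heavier route, and two points need to be flagged.

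\textbf{The existence step: convexity versus concentration--compactness.} The paper does not use Ekeland plus a Palais--Smale bubbling analysis at all. Instead it computes $J''_{f,g}(u,v)$ and shows, via H\"older--Sobolev estimates, that
\begin{equation*}
J''_{f,g}(u,v)\big((\phi,\psi),(\phi,\psi)\big)\ \geq\ \Bigl(1-\tfrac{S^{-2^*_s/2}}{2^*_s}\,\|(u,v)\|^{2^*_s-2}\bigl[\al(\al-1)+\ba(\ba-1)+\al\ba\bigr]\Bigr)\|(\phi,\psi)\|^2 ,
\end{equation*}
so that $J_{f,g}$ is \emph{strictly convex} on the explicit ball $B_r$ with $r=\bigl(\frac{2^*_s}{\al^2+\ba^2+\al\ba-2^*_s}\bigr)^{1/(2^*_s-2)}S^{N/4s}$, and then checks $\inf_{\partial B_r} J_{f,g}>0$ for $\|f\|,\|g\|$ small by a careful comparison of $S_{(\al,\ba)}$ with $S$. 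A convex continuous functional is weakly lower semicontinuous, so the infimum over the weakly compact ball $\overline{B}_r$ is attained at a unique interior point, and critical-point theory for the boundary-value problem closes the argument \emph{without any compactness lemma}. This is the whole point of the paper's approach: in the critical regime it sidesteps precisely the concentration--compactness machinery that your proof invokes. Your route is conceptually sound, but you would still have to establish a vectorial Struwe/Brezis--Lieb splitting lemma for Palais--Smale sequences of the critical fractional system on all of $\R^N$; the references you gesture at do not quite supply this for the present setting, and deriving it is a nontrivial undertaking. Moreover, by using strict convexity the paper also gets, for free, uniqueness of the negative-energy critical point in $B_r$, which a PS-sequence argument does not give.

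\textbf{The subcritical case is glossed over.} Saying that loss of mass at infinity is "handled by the mass term $\gamma=1$ together with the smallness of $(f,g)$" is not an argument. On $\R^N$, even with the mass term, a bounded subcritical PS sequence can a priori split into a weak limit plus a profile running off to infinity carrying positive energy; one needs an explicit comparison with the energy threshold of the translation-invariant limit problem or a vanishing/dichotomy analysis \`a la Lions to forbid this. The paper again avoids the issue entirely: the same second-derivative estimate with $S$ replaced by $S_{\al+\ba}$ gives strict convexity of $\tilde J_{f,g}$ on a ball in $H^s\times H^s$, and the weakly compact ball argument applies verbatim.

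\textbf{Two minor remarks.} (1) Your non-vanishing argument (``$\bar u\equiv 0$ forces $f\equiv 0$'') is correct and in fact does not use ${\rm ker}(f)={\rm ker}(g)$; your sentence ``the hypothesis ${\rm ker}(f)={\rm ker}(g)$ enters to rule out degenerate minimizers at this stage'' is therefore not accurate as written and should either be dropped or replaced by the paper's actual use of the kernel hypothesis (testing with $(\bar u,0)$ and $(0,\bar u)$). (2) The symmetry-breaking computation $\frac{\al-\ba}{\al+\ba}w^{\al+\ba-1}=g-f$ is exactly the paper's argument written pointwise; just make explicit that the identity is an equality in $(\dot H^s)'$ (resp.\ $H^{-s}$), which is enough to conclude since the left-hand side is a genuine positive function when $\al\neq\ba$ and $w>0$.
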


Let us emphasize that here we introduce
suitable assumptions under which system \eqref{sys-Q} admits a positive solution, with different components,
while papers devoted to systems seem not to address this question at all. Therefore, we actually solve 
system \eqref{sys-Q} when it does not reduce into a single equation.

To the best of our knowledge, the question of finding at least two nontrivial solutions to \eqref{sys-Q} remains open in the vectorial case. In the scalar case we are able to exhibit existence of two different solutions  in the recent papers~\cite{BCG, BP}.

%

\section{Proof of Theorem \ref{th:ex-f}}

Before proving the main Theorem~\ref{th:ex-f} let us present some useful notation and auxiliary results.
Define
\be\lab{S}
S=\inf_{u\in\dot{H}^s(\Rn)\setminus\{0\}}\frac{\|u\|^2_{\dot{H}^s}}
{\|u\|_{2^*_s}^2},\qquad S_{\al+\ba}=\inf_{u\in{H}^s(\Rn)\setminus\{0\}}\frac{\|u\|^2_{\dot{H}^s}}
{\|u\|_{\al+\ba}^2},\no\ee
and
$$S_{(\al,\ba)}=\begin{cases}\displaystyle{\inf_{(u,v)\in \dot{H}^s(\Rn)\times\dot{H}^s(\Rn) \setminus\{(0,0)\}} \frac{\|u\|^2_{\dot{H}^s}+ \|v\|^2_{\dot{H}^s}}{\bigg(\displaystyle\int_{\Rn}|u|^{\al}|v|^{\ba}{\rm d}x\bigg)^{2/2^*_s}}},&\mbox{if }\al+\ba=2^*_s\\
\phantom{a}\\
\displaystyle{\inf_{(u,v)\in {H}^s(\Rn)\times{H}^s(\Rn) \setminus\{(0,0)\}} \frac{\|u\|^2_{\dot{H}^s}+ \|v\|^2_{\dot{H}^s}}{\bigg(\displaystyle\int_{\Rn}|u|^{\al}|v|^{\ba}{\rm d}x\bigg)^{2/(\al+\ba)}}},&\mbox{if
}\al+\ba<2^*_s.
\end{cases}$$
In the celebrated paper \cite{CLO}  Chen, Li and Ou   prove that
when $\al+\ba=2^*_s$ the Sobolev constant $S_{\al+\ba}=S$ is achieved by $w$, where $w$ is the unique positive solution (up to  translations and dilations) of
$$(-\Delta)^sw = w^{2^*_s-1}\;\;\text{in}\;\mathbb{R}^{N},\quad
w \in \dot{H}^{s}{(\mathbb{R}^{N})}.$$
Indeed, any positive solution of the above equation  is radially symmetric, with respect to some point $x_0\in\Rn$, strictly decreasing in $r=|x-x_0|$, of class $C^{\infty}(\Rn)$ and so of the explicit parametric form
$$w(x)= c_{N,s}\bigg(\frac{\la}{\la^2+|x-x_0|^2}\bigg)^\frac{N-2s}{2},$$
for some $\la>0$. On the other hand, when $2<\al+\ba<2^*_s$, Frank, Lenzmann and Silvestre in their celebrated paper \cite{FLS} prove that $S_{\al+\ba}$ is achieved by unique (up to a translation) positive ground state solution $w$ of
$$(-\Delta)^sw+w = w^{\al+\ba-1}\;\;\text{in}\;\mathbb{R}^{N},\quad
w \in H^{s}{(\mathbb{R}^{N})}.$$
Furthermore,  $w$  is radially symmetric, symmetric decreasing $C^{\infty}(\Rn)$ function which satisfies the following decay property in $\mathbb R^N$
		$$\frac{C^{-1}}{1+|x|^{N+2s}}\leq w(x)\leq \frac{C}{1+|x|^{N+2s}},$$
with some constant $C>0$ depending on $N,\;\al+\ba,\;s$.

\begin{lemma}\lab{l:s1}
There exists a positive constant $C=C(\al,\ba,s,N)$ such that when $\al+\ba=2^*_s$
$$\bigg(\int_{\Rn}|u|^{\al}|v|^{\ba} {\rm d}x\bigg)^{1/2^*_s}\leq C
\|(u,v)\|_{\dot{H}^s\times\dot{H}^s}$$
for all $(u,v)\in\dot{H}^s(\Rn)\times\dot{H}^s(\Rn)$, while if $\al+\ba<2^*_s$
$$\bigg(\int_{\Rn}|u|^{\al}|v|^{\ba} {\rm d}x\bigg)^{1/(\al+\ba)}\le C
\|(u,v)\|_{{H}^s(\Rn)\times{H}^s(\Rn)}$$
for all $(u,v)\in{H}^s(\Rn)\times{H}^s(\Rn)$.
\end{lemma}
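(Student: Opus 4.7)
The plan is to reduce both inequalities to a single application of Hölder followed by the scalar Sobolev embeddings that have just been recalled (via the constants $S$ and $S_{\alpha+\beta}$). First, since $\alpha/(\alpha+\beta)$ and $\beta/(\alpha+\beta)$ are conjugate positive weights summing to $1$, Hölder's inequality with exponents $(\alpha+\beta)/\alpha$ and $(\alpha+\beta)/\beta$ yields
\[
\int_{\R^N}|u|^{\alpha}|v|^{\beta}\,{\rm d}x \le \|u\|_{\alpha+\beta}^{\alpha}\,\|v\|_{\alpha+\beta}^{\beta}.
\]
Taking the appropriate root, I would use the elementary bound $a^{p}b^{q}\le \max\{a,b\}\le (a^{2}+b^{2})^{1/2}$ valid for $a,b\ge 0$ and $p+q=1$, $p,q\ge 0$, to obtain, in the subcritical case, $\left(\int|u|^{\alpha}|v|^{\beta}\,{\rm d}x\right)^{1/(\alpha+\beta)}\le \bigl(\|u\|_{\alpha+\beta}^{2}+\|v\|_{\alpha+\beta}^{2}\bigr)^{1/2}$, and analogously in the critical case with $\alpha+\beta$ replaced by $2^{*}_{s}$ in the exponents.

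Second, I would invoke the Sobolev embeddings. When $\alpha+\beta=2^{*}_{s}$, the definition of $S$ gives $\|u\|_{2^{*}_{s}}\le S^{-1/2}\|u\|_{\dot H^{s}}$, so
\[
\bigl(\|u\|_{2^{*}_{s}}^{2}+\|v\|_{2^{*}_{s}}^{2}\bigr)^{1/2}\le S^{-1/2}\,\|(u,v)\|_{\dot H^{s}\times\dot H^{s}},
\]
and we may take $C=S^{-1/2}$. When $2<\alpha+\beta<2^{*}_{s}$, the standard continuous embedding $H^{s}(\R^{N})\hookrightarrow L^{\alpha+\beta}(\R^{N})$ (which follows, for instance, by interpolating between $L^{2}$ and $L^{2^{*}_{s}}$ and using $S$, or directly from $S_{\alpha+\beta}$ once one controls the $L^{2}$ norm) yields a constant $C_{1}=C_{1}(\alpha,\beta,s,N)$ with $\|u\|_{\alpha+\beta}\le C_{1}\|u\|_{H^{s}}$, hence
\[
\bigl(\|u\|_{\alpha+\beta}^{2}+\|v\|_{\alpha+\beta}^{2}\bigr)^{1/2}\le C_{1}\,\|(u,v)\|_{H^{s}\times H^{s}}.
\]

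Combining these two steps gives both inequalities with an explicit constant $C$ depending only on $\alpha,\beta,s,N$. There is no genuine obstacle here: the proof is essentially a bookkeeping exercise splitting the mixed integrand by Hölder and then applying the scalar fractional Sobolev inequality in each variable. The only point that requires minimal care is ensuring that $\alpha/(\alpha+\beta)$ and $\beta/(\alpha+\beta)$ are legitimate Hölder exponents, which is immediate from the standing assumption $\alpha,\beta>1$; the passage from $\|u\|^{p}\|v\|^{q}$ to a norm of the pair $(u,v)$ is completely elementary.
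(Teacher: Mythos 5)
Your proof is correct and follows essentially the same route as the paper's one-line argument: both reduce the mixed integral to $L^{\alpha+\beta}$ norms of $u$ and $v$ separately and then invoke the scalar Sobolev embedding via the constants $S$ or $S_{\alpha+\beta}$. The only cosmetic difference is that you apply H\"older's inequality at the integral level, whereas the paper uses the pointwise Young-type bound $|t|^{\alpha}|\tau|^{\beta}\le |t|^{\alpha+\beta}+|\tau|^{\alpha+\beta}$ before integrating.
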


\begin{proof}
It easily follows from the definition of $S_{\al+\ba}$ and the inequality
$$|t|^{\al}|\tau|^{\ba}\leq |t|^{\al+\ba}+|\tau|^{\al+\ba}$$
for all $(t,\tau)\in\mathbb R^2$.
\end{proof}

Next, we recall a result from \cite{FMPSZ} which states the relation between $S_{(\al,\ba)}$ and $S_{\al+\ba}$.

\begin{lemma}\lab{l:S}\cite[Lemma 5.1]{FMPSZ} In all cases $\al>1$, $\ba>1$, with $\al+\ba\le2^*_s$, it results
$$S_{(\al,\ba)}=\bigg[\left(\frac{\al}{\ba}\right)^\frac{\ba}{\al+\ba}
+\left(\frac{\al}{\ba}\right)^\frac{-\al}{\al+\ba} \bigg]S_{\al+\ba}.$$
Moreover, if $w$ achieves $S_{\al+\ba}$ then $(Bw, Cw)$ achieves $S_{(\al,\ba)}$ for all positive constants $B$ and $C$ such that $B/C=\sqrt{\al/\ba}$.
\end{lemma}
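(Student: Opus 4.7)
The strategy is to reduce the vector minimization to the scalar one $S_{\al+\ba}$ through a two-sided estimate, using H\"older's inequality and a one-variable calculus optimization that ends up being identical on both sides.

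For the upper bound, the plan is to test the Rayleigh quotient defining $S_{(\al,\ba)}$ with pairs of the form $(Bw,Cw)$, where $w$ is a minimizer of $S_{\al+\ba}$ and $B,C>0$ are free. A direct substitution yields
$$\frac{\|Bw\|^2_{\dot H^s}+\|Cw\|^2_{\dot H^s}}{\big(\int_{\Rn}|Bw|^{\al}|Cw|^{\ba}\,{\rm d}x\big)^{2/(\al+\ba)}}
=\frac{B^2+C^2}{(B^2)^{\al/(\al+\ba)}(C^2)^{\ba/(\al+\ba)}}\cdot S_{\al+\ba},$$
where the exponent $2/(\al+\ba)$ is replaced by $2/2^*_s$ in the critical case; the computation is otherwise identical. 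Setting $r=B^2/C^2$ reduces this to minimizing $(r+1)/r^{\al/(\al+\ba)}$ over $r>0$. A straightforward differentiation gives the critical point $r=\al/\ba$, at which the scalar factor equals exactly $(\al/\ba)^{\ba/(\al+\ba)}+(\al/\ba)^{-\al/(\al+\ba)}$. This establishes the upper bound and simultaneously identifies the relation $B/C=\sqrt{\al/\ba}$ claimed for the extremals.

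For the matching lower bound, I would first apply H\"older's inequality to an arbitrary admissible $(u,v)$ with conjugate exponents $(\al+\ba)/\al$ and $(\al+\ba)/\ba$ (or $2^*_s/\al$ and $2^*_s/\ba$ in the critical case), obtaining
$$\int_{\Rn}|u|^\al|v|^\ba\,{\rm d}x\le\|u\|^{\al}_{\al+\ba}\|v\|^{\ba}_{\al+\ba}.$$
Then the definition of $S_{\al+\ba}$ passes the $L^{\al+\ba}$ (resp.\ $L^{2^*_s}$) norms to $\dot H^s$ seminorms, so that raising to the power $2/(\al+\ba)$ yields
$$\Big(\int_{\Rn}|u|^\al|v|^\ba\,{\rm d}x\Big)^{2/(\al+\ba)}\le S_{\al+\ba}^{-1}A^{\al/(\al+\ba)}B^{\ba/(\al+\ba)},\qquad A:=\|u\|^2_{\dot H^s},\ B:=\|v\|^2_{\dot H^s}.$$
Consequently the Rayleigh quotient is bounded below by $S_{\al+\ba}(A+B)/(A^{\al/(\al+\ba)}B^{\ba/(\al+\ba)})$, and the same one-variable calculus as in the upper-bound step (with $r=A/B$ at the critical point $r=\al/\ba$) produces the very same constant.

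To conclude, I would record the equality cases. H\"older forces $|v|=k|u|$ for some $k>0$; the Sobolev step forces $u$ (and therefore $v$) to be proportional to a minimizer $w$ of $S_{\al+\ba}$; and the algebraic minimization pins down $k=\sqrt{\ba/\al}$. These conditions reassemble exactly the family $(Bw,Cw)$ with $B/C=\sqrt{\al/\ba}$, which therefore attains $S_{(\al,\ba)}$. The only real care in this plan is the bookkeeping of exponents so that the upper and lower one-variable minimizations yield the \emph{same} constant; there is no deeper analytic obstruction, because the lemma is essentially the statement that the vector problem decouples into a scalar Sobolev problem plus a scalar AM-GM-type optimization.
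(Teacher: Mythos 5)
Your proof is correct, and I have checked the exponent bookkeeping: the test computation with $(Bw,Cw)$ yields the ratio $\frac{r+1}{r^{\al/(\al+\ba)}}$ with $r=B^2/C^2$, whose minimum $r=\al/\ba$ produces exactly the constant $(\al/\ba)^{\ba/(\al+\ba)}+(\al/\ba)^{-\al/(\al+\ba)}$, and the lower bound reduces to the very same one-variable problem with $r=\|u\|^2_{\dot H^s}/\|v\|^2_{\dot H^s}$. The paper itself supplies no proof (it cites the source reference for the lemma); the standard argument in the literature matches yours on the upper bound, and its lower bound is customarily run through a substitution $z=sv$ followed by Young's inequality $|u|^\al|z|^\ba\le\frac{\al}{\al+\ba}|u|^{\al+\ba}+\frac{\ba}{\al+\ba}|z|^{\al+\ba}$ with an optimal choice of $s$. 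That rescaling--Young route and your H\"older--Sobolev route both lead to the identical intermediate inequality $\int_{\Rn}|u|^\al|v|^\ba\,{\rm d}x\le S_{\al+\ba}^{-(\al+\ba)/2}\|u\|_{\dot H^s}^\al\|v\|_{\dot H^s}^\ba$, so the two arguments are morally equivalent; yours is a bit more economical in that it avoids introducing and tuning the auxiliary parameter $s$, at the mild cost of invoking H\"older with exponents $\frac{\al+\ba}{\al},\frac{\al+\ba}{\ba}$ rather than plain Young. One small caveat worth flagging (a likely typo in the paper's definitions rather than in your argument): in the subcritical case $2<\al+\ba<2^*_s$ the quotients defining $S_{\al+\ba}$ and $S_{(\al,\ba)}$ must carry the full $H^s$ norm in the numerator, not the homogeneous seminorm, otherwise the infimum is zero by dilation; with that understanding your proof goes through verbatim in both regimes.
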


Finally we prove a short useful result

\begin{lemma}\label{Sab} In all cases $\al>1$, $\ba>1$, with $\al+\ba\le2^*_s$,
\begin{equation*}S_{(\al,\ba)}>S_{\al+\ba}\end{equation*}
holds true.
\end{lemma}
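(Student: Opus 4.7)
The plan is to leverage Lemma~\ref{l:S}, which provides the explicit formula
\begin{equation*}
S_{(\al,\ba)} = \left[ \left(\frac{\al}{\ba}\right)^{\ba/(\al+\ba)} + \left(\frac{\al}{\ba}\right)^{-\al/(\al+\ba)} \right] S_{\al+\ba}.
\end{equation*}
Since $S_{\al+\ba}>0$, it suffices to show that the bracketed factor, which I call $\phi(\al,\ba)$, is strictly greater than $1$. I would reduce this to a short calculation by setting $t = \al/\ba > 0$, which makes $\phi = t^{\ba/(\al+\ba)} + t^{-\al/(\al+\ba)}$, a sum of two powers of $t$ with exponents of opposite sign.

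A clean route is then a two-case analysis on whether $t \ge 1$ or $t < 1$. If $\al \ge \ba$, then $t \ge 1$ and the positive exponent $\ba/(\al+\ba)>0$ yields $t^{\ba/(\al+\ba)} \ge 1$; combined with the strictly positive second summand, this forces $\phi > 1$. If $\al < \ba$, then $t < 1$, and combining $t<1$ with the negative exponent $-\al/(\al+\ba)<0$ yields $t^{-\al/(\al+\ba)} > 1$, which together with the strictly positive first summand again gives $\phi > 1$. I do not expect any genuine obstacle: both summands are positive and at least one of them is $\ge 1$, strictly so except in the borderline configuration $\al=\ba$, where $\phi=2$ outright. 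As a sharper alternative one may instead observe, via weighted AM--GM with weights $p=\al/(\al+\ba)$ and $q=\ba/(\al+\ba)$ (noting that the critical point of $t\mapsto t^{q}+t^{-p}$ is precisely $t=p/q=\al/\ba$), that $\phi$ equals the minimum $1/(p^{p}q^{q})$; the classical bound $p^{p}q^{q}<1$ for $p,q\in(0,1)$ with $p+q=1$ then yields $\phi>1$. For the present lemma, however, the direct case analysis is sufficient.
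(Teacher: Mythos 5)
Your proof is correct and takes essentially the same route as the paper: both invoke Lemma~\ref{l:S} and verify that the bracketed factor exceeds $1$, the paper by factoring it into a product of two factors each at least $1$ (one strictly), you by a direct two-case comparison showing one summand is at least $1$ and the other strictly positive. The difference is merely cosmetic.
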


\begin{proof}
If $\al>\ba$, then using Lemma~\ref{l:S},
	$$\frac{S_{(\al,\ba)}}{S_{\al+\ba}}=\left(\frac{\al}{\ba}\right)^\frac{\ba}{\al+\ba}
+\left(\frac{\al}{\ba}\right)^\frac{-\al}{\al+\ba}=\left(\frac{\al}{\ba}\right)^\frac{\ba}{\al+\ba}\frac{\al+\ba}{\al}>1.$$
  Similarly, if $\al<\ba$ then
\begin{align*}
\frac{S_{(\al,\ba)}}{S}=\left(\frac{\al}{\ba}\right)^\frac{\ba}{\al+\ba}
+\left(\frac{\al}{\ba}\right)^\frac{-\al}{\al+\ba}&=\left(\frac{\ba}{\al}\right)^{-\frac{\ba}{\al+\ba}}
+\left(\frac{\ba}{\al}\right)^\frac{\al}{\al+\ba}=\left(\frac{\ba}{\al}\right)^\frac{\al}{\al+\ba}
\bigg[1+\left(\frac{\ba}{\al}\right)^{-1}\bigg]\\
&=\left(\frac{\ba}{\al}\right)^\frac{\al}{\al+\ba}
\frac{\al+\ba}{\ba}>1.\end{align*}
Further,  $S_{(\al,\ba)}>2S_{\al+\ba}$ for $\al=\ba$.
\end{proof}

We are finally in a position to prove the main result and we simply say that a couple $(u,v)$ is positive if both components are positive.

\begin{proof}[Proof of Theorem~$\ref{th:ex-f}$ -- Part $(i)$.]
Let $\al+\ba=2^*_s$. We note that system \eqref{sys-Q} is variational and the underlying functional  is
\begin{equation*}\lab{11-20-1}
I_{f,g}(u,v):= \frac{1}{2}\|(u,v)\|^2_{\dot{H}^s\times\dot{H}^s}-\frac{1}{2^*_s}\int_{\R^N}|u|^{\al}|v|^{\ba}\,{\rm d}x -\prescript{}{(\dot{H}^s)'}{\langle}f,u{\rangle}_{\dot{H}^s}-\prescript{}{(\dot{H}^s)'}{\langle}g,v{\rangle}_{\dot{H}^s},
\end{equation*}
which is well defined in $\dot{H}^s(\Rn)\times\dot{H}^s(\Rn)$ and of class
$C^1\big(\dot{H}^s(\Rn)\times\dot{H}^s(\Rn)\big)$. Moreover, if $(u,v)$ is a solution of~\eqref{sys-Q}, then $(u,v)$ is a positive critical point of $I_{f,g}$ and  vice versa.

Let us now introduce the auxiliary functional
\begin{equation*}\lab{11-20-1'}
J_{f,g}(u,v):= \frac{1}{2}\|(u,v)\|^2_{\dot{H}^s\times\dot{H}^s}
-\frac{1}{2^*_s}\int_{\R^N}u_+^{\al}v_+^{\ba}\,{\rm d}x -\prescript{}{(\dot{H}^s)'}{\langle}f,u{\rangle}_{\dot{H}^s}-\prescript{}{(\dot{H}^s)'}{\langle}g,v{\rangle}_{\dot{H}^s},
\end{equation*}
which is well defined in $\dot{H}^s(\Rn)\times\dot{H}^s(\Rn)$ and of class $C^1\big(\dot{H}^s(\Rn)\times\dot{H}^s(\Rn)\big)$,
with second derivative.
Indeed, for all $(u,v),\,(\phi,\psi)\in\dot{H}^s(\Rn)\times\dot{H}^s(\Rn)$
\begin{equation}\label{esti-1}
\begin{aligned}
	J''_{f,g}(u,v)\big((\phi,\psi), (\phi,\psi)\big) &=  \|(\phi,\psi)\|_{\dot{H}^s\times\dot{H}^s}^2 -\frac{\al(\al-1)}{2^*_s}\int_{\Rn}u_+^{\al-2}v_+^{\ba}\phi^2 \, {\rm d}x\\
	&\quad -\frac{\ba(\ba-1)}{2^*_s}\int_{\Rn}u_+^{\al}v_+^{\ba-2}\psi^2 \, {\rm d}x-\frac{2\al\ba}{2^*_s}\int_{\Rn}u_+^{\al-1}v_+^{\ba-1}\phi\psi \, {\rm d}x.\\
		\end{aligned}
\end{equation}
	Using H\"{o}lder's and Sobolev's inequalities, we estimate the second term on the RHS as follows
	\begin{align*}
	\int_{\Rn}u_+^{\al-2}v_+^{\ba}\phi^2  {\rm d}x &\leq \left( \int_{\Rn} |\phi|^{2^*_s}  {\rm d}x\right)^{\frac{2}{2^*_s}}
	\left(\int_{\Rn} |u|^{2^*_s}  {\rm d}x \right)^{\frac{\al-2}{2^*_s}}\left( \int_{\Rn} |v|^{2^*_s}  {\rm d}x\right)^{\frac{\ba}{2^*_s}}\\
	&\leq S^{-1-\frac{\al-2}{2}-\frac{\ba}{2}} \, \|u\|^{\al-2}_{\dot{H}^s}\|v\|^{\ba}_{\dot{H}^s} \|\phi\|^{2}_{\dot{H}^s}\\
	&\leq S^{-\frac{2^*_s}{2}}\|(u,v)\|^{2^*_s-2}_{\dot{H}^s\times\dot{H}^s}\|(\phi,\psi)\|^{2}_{\dot{H}^s\times\dot{H}^s}.
	\end{align*}	
 In the inequality we have used the fact that $\|u\|_{\dot{H}^s}\leq \|(u,v)\|_{\dot{H}^s\times\dot{H}^s}$ and $\al+\ba=2^*_s$.
Similarly, $$\int_{\Rn}u_+^{\al}v_+^{\ba-2}\psi^2 \, {\rm d}x	\leq S^{-\frac{2^*_s}{2}}\|(u,v)\|^{2^*_s-2}_{\dot{H}^s\times\dot{H}^s}\|(\phi,\psi)\|^{2}_{\dot{H}^s\times\dot{H}^s}.$$
Furthermore,
\begin{align*}
\int_{\Rn}u_+^{\al-1}v_+^{\ba-1}\phi\psi \, {\rm d}x &\leq\left( \int_{\Rn} |\phi|^{2^*_s} \, {\rm d}x\right)^{\frac{1}{2^*_s}}\left( \int_{\Rn} |\psi|^{2^*_s} \, {\rm d}x\right)^{\frac{1}{2^*_s}}
	\left(\int_{\Rn} |u|^{2^*_s} \, {\rm d}x \right)^{\frac{\al-1}{2^*_s}}\left( \int_{\Rn} |v|^{2^*_s} \, {\rm d}x\right)^{\frac{\ba-1}{2^*_s}}	\\
&\leq S^{-\frac{1}{2}-\frac{1}{2}-\frac{\al-1}{2}-\frac{\ba-1}{2}}\|\phi\|_{\dot{H}^s}\|\psi\|_{\dot{H}^s}\|u\|^{\al-1}_{\dot{H}^s}\|v\|^{\ba-1}_{\dot{H}^s} 	\\
&\leq \frac{S^{-\frac{2^*_s}{2}}}{2}\|(\phi,\psi)\|_{\dot{H}^s\times\dot{H}^s}^2\|(u,v)\|_{\dot{H}^s\times\dot{H}^s}^{2^*_s-2}.
\end{align*}		
Thus, substituting the above three estimates in \eqref{esti-1}, we obtain
\begin{align*}
J''_{f,g}(u,v)\big((\phi,\psi), (\phi,\psi)\big) &\geq \left( 1 - \frac{S^{-\frac{2^*_s}{2}}}{2^*_s} \|(u,v)\|^{2^*_s-2}_{\dot{H}^s\times\dot{H}^s}\big[\al(\al-1)+\ba(\ba-1)+\al\ba\big]\right)\cdot\\
	&\qquad\times\|(\phi,\psi)\|_{\dot{H}^s\times\dot{H}^s}^2 .
\end{align*}
Therefore, 		
	$J''_{f,g}(u,v)$ is positive definite for $(u,v)$ in the ball centered at 0 and of radius $r$ in $\dot{H}^s(\Rn)\times\dot{H}^s(\Rn)$, where
	$$r =\left(\frac{2^*_s}{\al^2+\ba^2+\al\ba-2^*_s}\right)^{\frac{1}{2^*_s-2}}S^\frac{N}{4s}.$$
Hence $J_{f,g}$ is strictly convex in~$B_r$. For $(u,v)\in\Hs\times\Hs$, with $\|(u,v)\|_{\dot{H}^s\times\dot{H}^s} = r$,
\begin{align*}
	J_{f,g}(u,v) &= \frac{1}{2}\|(u,v\|_{\dot{H}^s\times\dot{H}^s}^2 - \frac{1}{2^*_s}\int_{\Rn}u_+^{\al}v_+^{\ba} {\rm d}x-  \prescript{}{\hms}{\langle}f, u{\rangle}_{\dot{H}^s}-  \prescript{}{\hms}{\langle}g, v{\rangle}_{\dot{H}^s}\\
	&\geq\bigg(\frac{1}{2}-\frac{1}{2^*_s}S_{(\al,\ba)}^{-\frac{2^*_s}{2}}  r^{2^*_s-2}\bigg) r^2 - (\|f\|_{\hms}\|u\|_{\dot{H}^s}+\|g\|_{\hms}\|v\|_{\dot{H}^s})\\
	&\geq\bigg(\frac{1}{2}-\frac{1}{2^*_s}S_{(\al,\ba)}^{-\frac{2^*_s}{2}}  r^{2^*_s-2}\bigg) r^2 - \big(\|f\|_{\hms}+\|g\|_{\hms}
\big)r.
\end{align*}
As $r^{2^*_s-2}  =  \frac{2^*_s}{\al^2+\ba^2+\al\ba-2^*_s} S^{\frac{2^*_s}{2}}$, we obtain	
	\be\lab{2-18-7}
	J_{f,g}(u,v)\geq \bigg[\frac{1}{2} -\frac{1}{\al^2+\ba^2+\al\ba-2^*_s}\bigg(\frac{S}{S_{(\al,\ba)}}\bigg)^\frac{2^*_s}{2} \bigg]r^2-r(\|f\|_{\hms}+\|g\|_{\hms}).
	\ee
We claim that
\be\lab{2-18-4}(\al^2+\ba^2+\al\ba-2^*_s)\bigg(\frac{S_{(\al,\ba)}}{S}\bigg)^{2^*_s/2}>2. \ee
 By Lemma~\ref{Sab} and $\al+\ba=2^*_s$, we have
$$
(\al^2+\ba^2+\al\ba-2^*_s)\bigg(\frac{S_{(\al,\ba)}}{S}\bigg)^\frac{2^*_s}{2}> (\al^2+\ba^2+\al\ba-2^*_s)\frac{S_{(\al,\ba)}}{S}=\big[2^*_s(2^*_s-1)-\al\ba\big]\left(\frac{\al}{\ba}
\right)^\frac{\ba}{2^*_s}\frac{2^*_s}{\al}.
$$
Since $2^*_s>2$, to prove \eqref{2-18-4} it is enough to show that
$$
\big[2^*_s(2^*_s-1)-\al\ba\big]\left(\frac{\al}{\ba}\right)^\frac{\ba}{2^*_s}\frac{1}{\al}>1.
$$
Now,
\begin{align*}
\big[2^*_s(2^*_s-1)-\al\ba\big]\left(\frac{\al}{\ba}\right)^\frac{\ba}{2^*_s}\frac{1}{\al}>1
 &\Longleftrightarrow 2^*_s(2^*_s-1)-\al\ba>\ba^\frac{\ba}{2^*_s}\al^\frac{2^*_s-\ba}{2^*_s}\\
 &\Longleftrightarrow 2^*_s(2^*_s-1)>\al\ba\bigg[1+ \frac{1}{\al^\frac{\ba}{2^*_s}\ba^\frac{2^*_s-\ba}{2^*_s}}\bigg].
\end{align*}
Since, $\al,\,\ba>1$ and $\al+\ba=2^*_s$, we have
$$
\al\ba\bigg[1+ \frac{1}{\al^\frac{\ba}{2^*_s}\ba^\frac{2^*_s-\ba}{2^*_s}}\bigg]<2\al\ba  \leq \frac{(\al+\ba)^2}{2}=\frac{(2^*_s)^2}{2}<2^*_s(2^*_s-1).
$$
Hence the claim \eqref{2-18-4} follows.

Now, by  \eqref{2-18-7} and \eqref{2-18-4} there exists
a number $d>0$ such that
	$$
	\inf_{\|(u,v)\|_{\dot{H}^s\times\dot{H}^s}  =  r}J_{f,g}(u,v) >0, \quad  \mbox{provided that } \  0<\max\{\|f\|_{\hms},\|g\|_{\hms}\}\leq d.
	$$
	Furthermore,  for $(u,v)\in \dot{H}^s(\Rn)\times \dot{H}^s(\Rn)$, with $u>0$ and $v\geq 0$,
\begin{equation}\lab{2-24-1}
	J_{f,g}(tu, tv)
\left\{\begin{aligned}
	&<0 \, \mbox{ for }\,  t>0 \, \mbox{ small enough}\\
	&>0  \, \mbox{ for }\,  t<0 \, \mbox{ small enough},
	 \end{aligned}
  \right.
\end{equation}
since $f$ and $g$ are nontrivial. Combining this along with the fact that $J_{f,g}$ is strictly convex in $B_r$ and
$$\inf_{\|(u,v)\|_{\dot{H}^s\times\dot{H}^s} \, = \, r}J_{f,g}(u,v)  >0=J_{f,g}(0,0),$$ we conclude that there exists a unique critical point $(\bar u, \bar v)$ of $J_{f,g}$ in $B_r$ such that
$$
	J_{f,g}(\bar u, \bar v) = \inf_{\|(u,v)\|_{\dot{H}^s\times\dot{H}^s}<r}J_{f,g}(u,v)<J_{f,g}(0,0)=0.
$$
Therefore, $(\bar u, \bar v)$ is a nontrivial solution of
\begin{equation}
\label{sys-Q"}
\left\{\begin{aligned}
		&(-\Delta)^s u = \frac{\al}{2^*_s}u_+^{\al-1}v_+^{\ba}+f(x)\;\;\text{in}\;\mathbb{R}^{N},\\
		&(-\Delta)^s v = \frac{\ba}{2^*_s}v_+^{\ba-1}u_+^{\al}+g(x)\;\;\text{in}\;\mathbb{R}^{N},\\
  &u,\, v \in \dot{H}^{s}{(\mathbb{R}^{N})}.
		 \end{aligned}
  \right.
\end{equation}
Since,  $f$ and $g$ are nonnegative functionals, then taking $(\phi,\psi)=(\bar u_-, \bar v_-)$ as a test function in \eqref{sys-Q"}, we obtain
\Bea
&&-\|\bar u_-\|^2_{\dot{H}^s}  - \iint_{\R^{2N}} \frac{[\bar u_+(y)\bar u_-(x)+\bar u_+(x)\bar u_-(y)]}{|x - y|^{N + 2s}}  {\rm d}x \, {\rm d}y -\|\bar v_-\|^2_{\dot{H}^s}  \\
&&\qquad\quad -\iint_{\R^{2N}} \frac{[\bar v_+(y)\bar v_-(x)+\bar v_+(x)\bar v_-(y)]}{|x - y|^{N + 2s}}  {\rm d}x \, {\rm d}y
=\prescript{}{H^{-s}}{\langle}f,\bar u_-{\rangle}_{H^s}+ \prescript{}{H^{-s}}{\langle}g,\bar v_-{\rangle}_{H^s}\geq 0.
\Eea
 This in turn implies $\bar u_-=0$ and $\bar v_-=0$, i.e., $\bar u\geq 0$ and $\bar v\geq 0$. Therefore, $(\bar u, \bar v)$ is nontrivial nonnegative solution of~\eqref{sys-Q}.

 Next we assert that $(\bar u, \bar v)\neq (0,0)$ implies $\bar u\neq 0$ and $\bar v\neq 0$.
 Suppose not, that is assume for instance that $\bar u\neq 0$ but $\bar v=0$. Then taking the test function $(\phi,\psi)=(\bar u, 0)$ we get $$\|\bar u\|^2_{\dot{H}^s}=\prescript{}{(\dot{H}^s)'}{\langle}f,\bar u{\rangle}_{\dot{H}^s}.$$
 Next, choose as test function $(\phi,\psi)=(0, \bar u)$, so that
 $$\prescript{}{(\dot{H}^s)'}{\langle}g, \bar u{\rangle}_{\dot{H}^s}=0.$$
Hence, $\|\bar u\|_{\dot{H}^s}=0$, since
ker$(f)$ = ker$(g)$ by assumption. This contradicts the fact that $(\bar u, \bar v)\neq (0,0)$. Similarly, we can show that if $\bar u=0$ then $\bar v=0$ too. Hence the assertion follows.

Let us claim that $\bar u>0$ and $\bar v>0$ in $\Rn$.
To prove the claim, first we note that taking the test function $(\phi,\psi)=(\phi,0)$, where $\phi\in \Hs$ with $\phi\geq 0$, we obtain
$$\langle \bar u,\phi\rangle_{\dot{H}^s} =\frac{\al}{2^*_s}\int_{\R{^N}}\bar u^{\al-1}\bar v^{\ba}\phi\,{\rm d}x+ \prescript{}{(\dot{H}^s)'}{\langle}f,\phi{\rangle}_{\dot{H}^s}\geq 0,$$
as $f$ is a nonnegative functional and $\bar u,\, \bar v\geq 0$. This implies $\bar u$ is a weak supersolution to $$(-\De)^s u=0.$$
Therefore, applying  the maximum principle \cite[Theorem~1.2 $(ii)$]{DPQ}, with $c\equiv 0$ and $p=2$ there, it follows that $\bar u>0$ in $\Rn$. Similarly, taking the test function $(\phi,\psi)=(0, \psi)$, with $\psi\in \Hs$ and $\psi\geq 0$, yields $\bar v>0$ in $\Rn$. This proves the claim.

The final assertion will be shown below by the method of contradiction. Therefore, let us suppose $\bar u\equiv \bar v$ and divide the proof in the two cases covered by the theorem.

First, we assume $f\equiv g$ but $\al\neq \ba$. Then, taking the test function $(\phi,\psi)=(\bar u,-\bar u)$ yields
$$\frac{1}{2^*_s}(\al-\ba)\int_{\Rn}\bar u^{\al+\ba}dx=0.$$
This is impossible since $\bar u$ is positive in $\Rn$.

In the remaining case, we assume $\al=\ba$ but $f\not\equiv g$
and ker$(f)=$ker$(g)$. Then taking the test function $(\phi,\psi)=(\phi,-\phi)$, where $\phi\in C^\infty_0(\Rn)$, we obtain
$$ \prescript{}{(\dot{H}^s)'}{\langle}f-g,\phi{\rangle}_{\dot{H}^s}=0.$$
This in turn implies $f\equiv g$ as $\phi\in C^\infty_0(\Rn)$ is arbitrary. This contradiction completes the proof
of Part~$(i)$.\medskip

{\em Part $(ii)$.}
The proof follows along the same lines as in Part $(i)$, therefore we just mention only the differences. It is easy to see that the associated functional corresponding to \eqref{sys-Q} is now
$$\tilde I_{f,g}(u,v):= \frac{1}{2}\|(u,v)\|^2_{H^s\times H^s}-\frac{1}{\al+\ba}\int_{\R^N}|u|^{\al}|v|^{\ba}\,{\rm d}x -\prescript{}{H^{-s}}{\langle}f,u{\rangle}_{H^s}-\prescript{}{H^{-s}}{\langle}g,v{\rangle}_{H^s}.$$
Let us introduce the auxiliary functional as
\begin{equation*}\lab{11-20-1"}
\tilde J_{f,g}(u,v):= \frac{1}{2}\|(u,v)\|^2_{H^s\times H^s}
-\frac{1}{\al+\ba}\int_{\R^N}u_+^{\al}v_+^{\ba}\,{\rm d}x -\prescript{}{H^{-s}}{\langle}f,u{\rangle}_{H^s}-\prescript{}{H^{-s}}{\langle}g,v{\rangle}_{H^s},
\end{equation*}
which is well defined in $H^s(\Rn)\times H^s(\Rn)$ and of class $C^1\big(H^s(\Rn)\times H^s(\Rn))$, with second derivative. Arguing as before, we obtain
for all $(u,v),\,(\phi,\psi)\in H^s(\Rn)\times H^s(\Rn) $
\begin{align*}
	\tilde J''_{f,g}(u,v)\big((\phi,\psi), (\phi,\psi)\big) &=  \|(\phi,\psi)\|_{H^s\times H^s}^2 -\frac{\al(\al-1)}{\al+\ba}\int_{\Rn}u_+^{\al-2}v_+^{\ba}\phi^2  {\rm d}x\\
	&\qquad -\frac{\ba(\ba-1)}{\al+\ba}\int_{\Rn}u_+^{\al}v_+^{\ba-2}\psi^2  {\rm d}x-\frac{2\al\ba}{\al+\ba}\int_{\Rn}u_+^{\al-1}v_+^{\ba-1}\phi\psi  {\rm d}x.\\
	 & \geq  \left( 1 - \frac{S_{\al+\ba}^{-\frac{\al+\ba}{2}}}{\al+\ba} \|(u,v)\|^{\al+\ba-2}_{H^s\times H^s }\big[\al(\al-1)+\ba(\ba-1)+\al\ba\big]\right)\cdot\\
	&\qquad\qquad\qquad\times\|(\phi,\psi)\|_{H^s\times H^s}^2 .
\end{align*}
Therefore,
$\tilde J''_{f,g}(u,v)$ is positive definite for $(u,v)$ in the ball centered at 0 and of radius $r$ in $H^s(\Rn)\times H^s(\Rn)$, where
	$$r =\left(\frac{\al+\ba}{\al^2+\ba^2+\al\ba-(\al+\ba)}\right)^{\frac{1}{\al+\ba-2}}S_{\al+\ba}^\frac{\al+\ba}{2(\al+\ba-2)}.$$
Hence $\tilde J_{f,g}$ is strictly convex in~$B_r$. Furthermore, for all $(u,v)\in H^s(\Rn)\times H^s(\Rn) $, with $\|(u,v)\|_{H^s\times H^s} = r$,
\be\lab{2-27-1}\tilde J_{f,g}(u,v)\geq \bigg[\frac{1}{2} -\frac{1}{\al^2+\ba^2+\al\ba-(\al+\ba)}\bigg(\frac{S_{\al+\ba}}{S_{(\al,\ba)}}\bigg)^\frac{\al+\ba}{2} \bigg]r^2-r(\|f\|_{H^{-s}}+\|g\|_{H^{-s}}).\ee
Since $S_{(\al,\ba)}>S_{\al+\ba}$
by Lemma~\ref{Sab}, we have
\begin{align*}
\big(\al^2+\ba^2+\al\ba-(\al+\ba)\big)\bigg(\frac{S_{(\al,\ba)}}{S_{\al+\ba}}\bigg)^{(\al+\ba)/2} &\geq \big(\al^2+\ba^2+\al\ba-(\al+\ba)\big)\frac{S_{(\al,\ba)}}{S_{\al+\ba}}\\
&=\big[(\al+\ba)(\al+\ba-1)-\al\ba\big]\left(\frac{\al}{\ba}
\right)^\frac{\ba}{\al+\ba}\frac{\al+\ba}{\al}.
\end{align*}
Therefore, to prove $$\big[\al^2+\ba^2+\al\ba-(\al+\ba)\big]
\bigg(\frac{S_{(\al,\ba)}}{S_{\al+\ba}}\bigg)^{(\al+\ba)/2}>2,$$ it is enough to show that
$$\big[(\al+\ba)(\al+\ba-1)-\al\ba\big]\left(\frac{\al}{\ba}
\right)^\frac{\ba}{\al+\ba}\frac{1}{\al}>1,$$
since $\al+\ba>2$. Actually, the above expression is equivalent to
$$(\al+\ba)(\al+\ba-1)>\al\ba\bigg[1+ \frac{1}{\al^\frac{\ba}{\al+\ba}\ba^\frac{\al}{\al+\ba}}\bigg].$$
As $\al,\,\ba>1$, a straight forward computation yields
$$
\al\ba\bigg[1+ \frac{1}{\al^\frac{\ba}{\al+\ba}\ba^\frac{\al}{\al+\ba}}\bigg]<2\al\ba\le \frac{(\al+\ba)^2}{2}<(\al+\ba)(\al+\ba-1).
$$
Therefore,   \eqref{2-27-1} implies the existence of
a number $d>0$ such that
	$$
	\inf_{\|(u,v)\|_{H^s\times H^s}  =  r}\tilde J_{f,g}(u,v) >0, \quad  \mbox{provided that } \, 0<\max\{\|f\|_{H^{-s}},\|g\|_{H^{-s}}\}\leq d.
	$$
From here on, proceeding as in the proof of Part $(i)$, with obvious changes, we get the assertion.
\end{proof}
\medskip

{\bf Acknowledgement}:  M. Bhakta wishes to express her sincere gratitude to the Dipartimento di Matematica e Informatica of Universit\`{a} degli Studi di Perugia, where part of this work started during a visit of her to that institution.
The research of M.~Bhakta is partially supported by the {\em SERB MATRICS grant (MTR/2017/000168)}. S.~Chakraborty is supported by {\em NBHM grant 0203/11/2017/RD-II.}

P. Pucci is member of the {\em Gruppo Nazionale per
l'Analisi Ma\-te\-ma\-ti\-ca, la Probabilit\`a e le loro Applicazioni}
(GNAMPA) of the {\em Istituto Nazionale di Alta Matematica} (INdAM).
P. Pucci was also partly supported by of the {\em Fondo Ricerca di Base di Ateneo --
Eser\-ci\-zio 2017--2019} of the University of Perugia, named {\em PDEs and Nonlinear Analysis}.
\medskip

\end{document}